\documentclass{amsart}
\usepackage{amsmath,amssymb,amsthm}
\usepackage{graphicx}
\usepackage{braket}

\newtheorem{theorem}{Theorem}[section]
\newtheorem{proposition}[theorem]{Proposition}
\newtheorem{corollary}[theorem]{Corollary}

\theoremstyle{definition}
\newtheorem{definition}[theorem]{Definition}
\newtheorem{remark}[theorem]{Remark}
\newtheorem{example}[theorem]{Example}

\numberwithin{equation}{section}%

\newcommand{\flow}{\mathrm{Flow}}
\newcommand{\col}{\operatorname{Col}}
\newcommand{\type}{\operatorname{type}}
\newcommand{\pr}{\mathrm{pr}}
\newcommand{\Z}{\mathbb{Z}}

\makeatletter
\@namedef{subjclassname@2020}{\textup{2020} Mathematics Subject Classification}
\makeatother
\begin{document}
\title[An MGR and oriented spatial surfaces]{A multiple group rack and oriented spatial surfaces}
\author[A.~Ishii]{Atsushi Ishii}
\address{Institute of Mathematics, University of Tsukuba,
1-1-1 Tennodai, Tsukuba, Ibaraki 305-8571, Japan}
\email{aishii@math.tsukuba.ac.jp}
\author[S.~Matsuzaki]{Shosaku Matsuzaki}
\address[S.~Matsuzaki]{Liberal Arts Education Center, Ashikaga University, 268-1 Ohmae-cho, Ashikaga-shi, Tochigi 326-8558, Japan}
\email{matsuzaki.shosaku@g.ashikaga.ac.jp}
\author[T.~Murao]{Tomo Murao}
\address[T.~Murao]{Science and Technology Unit, Natural Sciences Cluster, Research and Education Faculty, Kochi University, 2-5-1 Akebono-cho, Kochi-shi, Kochi 780-8072, Japan}
\email{tmurao@kochi-u.ac.jp}
\keywords{Oriented spatial surface, Multiple group rack, Rack coloring, Seifert surface}
\subjclass[2020]{57K12, 57K10, 57K31}
\maketitle

\begin{abstract}
A spatial surface is a compact surface embedded in the $3$-sphere.
In this paper, we provide several typical examples of spatial surfaces and construct a coloring invariant to distinguish them.
The coloring is defined by using a multiple group rack, which is a rack version of a multiple conjugation quandle.
\end{abstract}


\section{Introduction}
A spatial surface is a compact surface embedded in the $3$-sphere.
If it has a boundary, the spatial surface is a Seifert surface of the link given by the boundary.
Recently, the second author~\cite{Matsuzaki19} established the Reidemeister moves for spatial surfaces with a boundary.
This enables us to study Seifert surfaces through their combinatorial structures.
On the other hand, a spatial surface without a boundary are closely related to handlebody-knots~\cite{Ishii08}, which is a handlebody embedded in the $3$-sphere.
A connected spatial surface divides the $3$-sphere into two $3$-manifolds, if the surface has no boundary.
When one of the $3$-manifolds is homeomorphic to a handlebody, the spatial surface represents a handlebody-knot.
In this sense, a spatial surface is a generalization of a handlebody-knot.

In this paper, we provide several typical examples of oriented spatial surfaces and construct a coloring invariant to distinguish them.
A coloring invariant for knots is defined by using a quandle~\cite{Joyce82,Matveev82}, whose axioms are derived from the Reidemeister moves for knots.
In a similar manner, a multiple conjugation quandle~\cite{Ishii15-1} is used to define a coloring invariant for handlebody-knots.
In this paper, we introduce a multiple group rack to define a coloring invariant for oriented spatial surfaces.
A multiple group rack is a rack version of a multiple conjugation quandle, that is, the axioms of a multiple group rack is obtained by removing an axiom from those of a multiple conjugation quandle.

In general, it is not easy to find a multiple group rack.
In this paper, we introduce a $G$-family of racks and show that it induces a multiple group rack, where we remark that a $G$-family of racks can be obtained from a rack with its type.
With the multiple group rack obtained from a $G$-family of racks, we can enhance the coloring invariant for oriented spatial surfaces.
Calculation examples provided in this paper are given by using this enhanced coloring invariant.
Some examples do not need coloring invariants to distinguish them.
We demonstrate that the boundary and regular neighborhood work well for some spatial surfaces.

As mentioned above, a rack and its type play important roles to evaluate coloring invariants of oriented spatial surfaces.
In this paper, we also provide a method to construct racks from a rack (and quandle).
More precisely, for a rack $X$, we obtain a new rack $X^n$ that is not a direct product of $X$'s.
We give an example of spatial surfaces that can be distinguished by using $R_3^3$, not $R_3$, where $R_3$ is the dihedral quandle of order $3$.

This paper is organized as follows.
In the first half of Section~2, we recall spatial surfaces and the Reidemeister moves for them.
In the second half of Section~2, we demonstrate that the boundary and regular neighborhood of a spatial surface can be used to distinguish some spatial surfaces.
In Section~3, we recall the definition of a rack and introduce a multiple group rack.
In Section~4, we define a coloring invariant with a multiple group rack.
In Section~5, we distinguish some spatial surfaces by using the coloring invariants.


\section{Oriented spatial surfaces and their regular neighborhoods and\\boundaries}

A \textit{spatial surface} is a compact surface embedded in the 3-sphere $S^3=\mathbb R^3 \cup \{ \infty \}$.
Oriented spatial surfaces $F$ and $F'$ are \textit{equivalent}, denoted by $F \cong F'$, 
if there is an orientation-preserving self-homeomorphism $h:S^3 \to S^3$ such that $h(F)=F'$, where we note that the orientation of $h(F)$ also coincides with that of $F'$.
The oriented spatial surface obtained by reversing the orientation of an oriented spatial surface $F$ is the \textit{reverse} of $F$.
An oriented spatial surface $F$ is \textit{reversible} if $F$ and its reverse are equivalent.
Unless otherwise noted, we suppose the following conditions:
(i) each component of a spatial surface has non-empty boundary, and
(ii) a spatial surface has no disk components.
In figures of this paper, the front side and the back side of an oriented spatial surface are colored by light gray and dark gray, respectively (see Fig.~\ref{FIG:color_gray}).
		\begin{figure}[htbp]
		\begin{center}
		\includegraphics{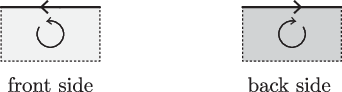}
		\caption{The front side and the back side.}
		\label{FIG:color_gray}
		\end{center}
		\end{figure}

A {\it spatial trivalent graph} is a trivalent graph embedded in $S^3$.
Let $D$ be a diagram of a spatial trivalent graph.
We obtain a spatial surface $F$ from $D$ by taking a regular neighborhood of $D$ in $\mathbb R^2$ and perturbing it around all crossings of $D$, according to its over/under information.
Then we give $F$ an orientation so that the front side of $F$ faces into the positive direction of the $z$-axis of $\mathbb R^3$  (see Fig.~\ref{FIG:construction_of_spatial_surface}).
Then we say that $D$ \textit{represents} the oriented spatial surface $F$ and call $D$ a \textit{diagram} of $F$.
We remark that any oriented spatial surface is equivalent to an oriented spatial surface obtained by this process.
		\begin{figure}[htbp]
		\begin{center}
		\includegraphics{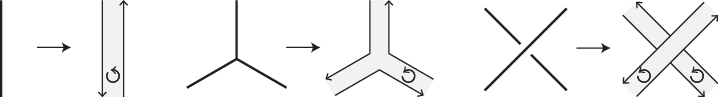}
		\caption{The process for obtaining an oriented spatial surface.}
		\label{FIG:construction_of_spatial_surface}
		\end{center}
		\end{figure}

\begin{theorem}[\cite{Matsuzaki19}]\label{theorem_oriented_surface}
Two oriented spatial surfaces are equivalent if and only if their diagrams are related by the Reidemeister moves on $S^2$, which are depicted in Fig.~\ref{FIG:spatial_surface_Reidemeister_move}.
\end{theorem}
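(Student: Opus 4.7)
The plan is to prove the theorem in the two standard directions. The ``if'' direction---that diagrams related by the moves in Figure~\ref{FIG:spatial_surface_Reidemeister_move} represent equivalent oriented spatial surfaces---requires only that I verify, for each listed move, that the local modification of the diagram can be realized by an ambient isotopy of the surface in $S^3$ that preserves the coloring of the front and back sides. For the classical crossing moves this is the usual argument, since the surface is obtained as a regular neighborhood of the underlying spatial trivalent graph. For the moves involving a trivalent vertex I would realize each local picture as an explicit small isotopy inside a ball, and check in each case that the light-gray and dark-gray labeling on the two sides matches up before and after the move.

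For the harder ``only if'' direction, I would follow the standard general-position approach. Given an orientation-preserving self-homeomorphism $h\colon S^3\to S^3$ realizing the equivalence, I would first replace it by an ambient isotopy $\{h_t\}_{0\le t\le 1}$ from the identity, and then perturb $\{h_t\}$ to be generic with respect to the vertical projection of $\mathbb{R}^3$ onto its $xy$-plane. Genericity here means that for all but finitely many times $t$ the image $h_t(F)$ projects to a valid diagram, and that at each singular time exactly one codimension-one degeneracy occurs: a birth or death of a crossing, a tangency of two strands, three strands becoming concurrent, a strand passing through the image of a trivalent vertex, or a rotation of a vertex permuting its three incident half-edges. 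Between consecutive singular times the diagram is unchanged up to planar isotopy, so passing through each singular time realizes exactly one local move on the diagram.

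The crux is then to match this finite list of degeneracies with the moves in Figure~\ref{FIG:spatial_surface_Reidemeister_move}, while keeping track of the orientation data encoded by the shading convention. For the crossing singularities this reduces to the classical Reidemeister theorem; the vertex-type singularities require verifying that every way a strand can pass near a trivalent vertex, and every rotation of a vertex, are realized, possibly after composing with R1--R3 moves. I expect the main obstacle to be the combinatorial bookkeeping around each trivalent vertex, since each of the three incident half-edges carries an independent front/back datum, and I must certify that the moves in the figure suffice to generate all local changes compatible with those data---not merely the underlying unoriented graph moves. Once this completeness is established, concatenating local moves across the finitely many singular times of the generic isotopy finishes the proof.
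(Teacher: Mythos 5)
The paper itself gives no proof of this theorem: it is imported from \cite{Matsuzaki19}, so there is no internal argument to compare yours against. Judged on its own terms, your outline has a genuine gap in the ``only if'' direction. A diagram here is a diagram of a trivalent \emph{spine} of the surface, and the surface is recovered as a perturbed regular neighborhood of that graph. An orientation-preserving homeomorphism $h$ of $S^3$ with $h(F)=F'$ gives you no control over spines: it need not carry the chosen spine of $F$ anywhere near the chosen spine of $F'$, since a given embedded surface deformation-retracts onto many mutually non-isotopic trivalent graphs. Your general-position argument---perturbing an ambient isotopy and cataloguing codimension-one singularities of the vertical projection---at best proves a Reidemeister theorem for \emph{spatial trivalent graphs}: if two graphs are ambient isotopic, their diagrams are related by crossing moves and vertex moves. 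It cannot produce the moves that change the underlying spine, notably the IH-type move R6 in Figure~\ref{FIG:spatial_surface_Reidemeister_move} (the move whose coloring invariance in Proposition~\ref{prop:Y-orientation} corresponds to associativity of the group multiplication); that move alters the combinatorial structure of the graph near an edge joining two vertices and does not arise as a projection singularity of a fixed graph moving by isotopy. Your ``rotation of a vertex permuting its three incident half-edges'' is not a substitute for it.

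The missing idea is an intermediate structural step: one must first show that any two trivalent spines of the same oriented spatial surface, compatible with the front/back convention, are related by ambient isotopy together with a specific finite list of spine changes (edge slides and IH-moves), and then realize those spine changes diagrammatically. Only after that reduction does the general-position analysis of a generic isotopy apply, and only then does concatenation over the singular times close the argument. A further point you should make explicit is the promotion of the homeomorphism $h$ to an ambient isotopy from the identity, which uses that every orientation-preserving self-homeomorphism of $S^3$ is isotopic to the identity; as stated you assume this silently. Without the spine-uniqueness step, however, the ``only if'' direction does not go through.
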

		\begin{figure}[htbp]
		\begin{center}
		\includegraphics{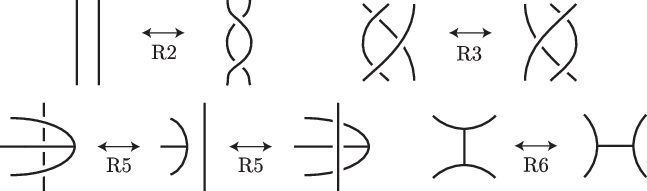}
		\caption{The Reidemeister moves for oriented spatial surfaces.}
		\label{FIG:spatial_surface_Reidemeister_move}
		\end{center}
		\end{figure}

Next, we introduce elementary methods to distinguish oriented spatial surfaces and show some examples.

A \textit{handlebody-link} is the disjoint union of handlebodies embedded in $S^3$.
A \textit{handlebody-knot} is a handlebody-link with one component.
Two handlebody-links are \textit{equivalent} if there is an orientation-preserving self-homeomorphism of $S^3$ sending one to the other.
Let $F$ be an oriented spatial surface.
The regular neighborhood $N(F)$ of $F$ in $S^3$ is a handlebody-link.
We denote by $\partial F$ the boundary of $F$, where we assume that $\partial F$ is oriented so that the orientation is coherent with that of $F$.

\begin{remark}\label{rem:elementary method}
If oriented spatial surfaces $F_1$ and $F_2$ are equivalent, then
\begin{enumerate}
\item
$\partial F_1$ and $\partial F_2$ are equivalent as oriented links, and
\item
$N(F_1)$ and $N(F_2)$ are equivalent as handlebody-links.
\end{enumerate}
\end{remark}

In Examples~\ref{ex:theta} and~\ref{ex:handlebody}, we distinguish oriented spatial surfaces from this observation.

\begin{example}\label{ex:theta}
Let $F(i,j,k)$ be an oriented spatial surface as illustrated in Fig.~\ref{FIG:example_theta} ($i,j,k \in \mathbb Z$), where the integers $i, j$ and $k$ respectively indicate $i,j$ and $k$ full-twists.
We note that a negative integer indicates the reverse twists.
Then, the following conditions are equivalent:
\begin{enumerate}
\item
the multiset $\{i,j,k\}$ coincides with the multiset $\{i',j',k'\}$,
\item
$F(i,j,k)$ and $F(i',j',k')$ are equivalent, and
\item
$\partial F(i,j,k)$ and $\partial F(i',j',k')$ are equivalent as oriented links.
\end{enumerate}
If the condition (1) holds, we can check that $F(i,j,k)$ and $F(i',j',k')$ are equivalent by rotating the sphere and by turning the sphere to be upside down.
From Remark~\ref{rem:elementary method} (1), the condition (2) implies (3).

For $\partial F=K_1 \cup K_2 \cup K_3$, the multiset
\[ \operatorname{lk}(\partial F):=\{\operatorname{lk}(K_1, K_2), \operatorname{lk}(K_2, K_3), \operatorname{lk}(K_3, K_1)\} \]
 of  the linking numbers of $K_1 \cup K_2$, $K_2 \cup K_3$ and $K_3 \cup K_1$ is an invariant of $\partial F$.
Since $\operatorname{lk}(\partial F(i,j,k))=\{i,j,k\}$, the condition (1) follows from (3).
\end{example}
	\begin{figure}[htbp]
	\begin{center}\includegraphics{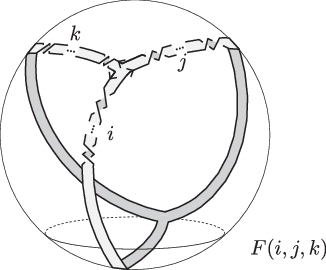}
	\caption{An oriented spatial surface $F(i,j,k)$ whose boundary has three components.}\label{FIG:example_theta}
	\end{center}
	\end{figure}

\begin{example}\label{ex:handlebody}
Let $F_1$ and $F_2$ be the spatial surfaces illustrated in Fig.~\ref{fig:example_handlebody-knot}.
The oriented links $\partial F_1$ and $\partial F_2$ are equivalent:
both of them are trivial knots.
Then we consider their regular neighborhoods.
It is easy to see that $N(F_1)$ is the genus 2 trivial handlebody-knot, where a trivial handlebody-knot is a handlebody standardly embedded in $S^3$.
On the other hand, $N(F_2)$ is not a trivial handlebody-knot~\cite{IshiiKishimotoMoriuchiSuzuki12}.
Therefore $F_1$ and $F_2$ are not equivalent, see Remark~\ref{rem:elementary method} (2).
\end{example}
	\begin{figure}[htbp]
	\begin{center}\includegraphics{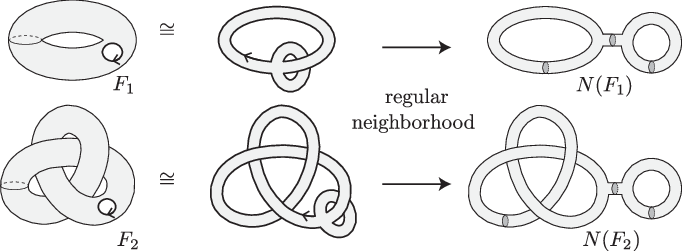}
	\caption{Inequivalent spatial surfaces.}\label{fig:example_handlebody-knot}
	\end{center}
	\end{figure}

\begin{remark}\label{uniqueness}
Let $\widetilde{F_1}$ and $\widetilde{F_2}$ be oriented spatial closed surfaces illustrated in Fig.~\ref{fig:disk close}, from which $F_1$ and $F_2$ in Fig.~\ref{fig:example_handlebody-knot} are obtained by removing a disk, respectively.
Then $\widetilde{F_1}$ and $\widetilde{F_2}$ are not equivalent, since $\widetilde{F_1}$ splits $S^3$ into two solid tori, but $\widetilde{F_2}$ does not.
This also implies that $F_1$ and $F_2$ are not equivalent by~\cite{Matsuzaki19}.

\end{remark}
	\begin{figure}[htpb]
	\begin{center}
	\includegraphics{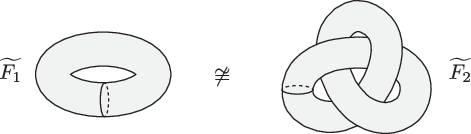}
	\caption{Inequivalent spatial closed surfaces.}\label{fig:disk close}
	\end{center}
	\end{figure}

The spatial surfaces $F_1$ and $F_2$ depicted in Fig.~\ref{fig:not invertible2} can not be distinguished by using the elementary methods introduced in Remarks~\ref{rem:elementary method} and~\ref{uniqueness}.
In Section~5, we show that they are not equivalent by using coloring invariants which we will introduce in Section~4.


\section{A multiple group rack}

In this section, we introduce a notion of multiple group racks and give examples of them.

A \textit{rack}~\cite{FennRourke92} is a non-empty set $X$ with a binary operation $*:X\times X\to X$ satisfying the following axioms:
\begin{itemize}
\item
For any $a\in X$, the map $S_a:X\to X$ defined by $S_a(x)=x*a$ is a bijection.
\item
For any $x,y,z\in X$, $(x*y)*z=(x*z)*(y*z)$.
\end{itemize}
A rack $X$ is called a {\it quandle}~\cite{Joyce82,Matveev82} if it satisfies the following axiom:
\begin{itemize}
\item
For any $x \in X$, $x*x=x$.
\end{itemize}
Let $X=(X,*)$ be a rack.
We denote $S_a^k(x)$ by $x*^k a$ for any $a, x \in X$ and $k \in \Z$, where we note that $S_a^0=\mathrm{id}_X$.
For any $x, x_1, \ldots, x_m \in X$ and $k_1, \ldots, k_n \in \Z$, we often abbreviate parentheses and write $S_{x_n}^{k_n}\circ\cdots\circ S_{x_2}^{k_2}\circ S_{x_1}^{k_1}(x)$ by $x*^{k_1}x_1*^{k_2}x_2\cdots*^{k_n}x_n$.

We give some examples of racks and quandles.
Let $G$ be a group.
We define a binary operation $*:G \times G \to G$ by $a* b:=b^{-1}ab$ for any $a,b \in G$.
Then $(G,*)$ is a quandle, called the \textit{conjugation quandle} of $G$ and denoted by $\operatorname{Conj}G$.
For a positive integer $n$, we denote by $\mathbb{Z}_n$ the cyclic group $\mathbb{Z}/n\mathbb{Z}$ of order $n$.
We define a binary operation $*:\Z_n \times \Z_n \to \Z_n$ by $a*b:=2b-a$ for any $a,b \in \Z_n$.
Then $(\mathbb{Z}_n,*)$ is a quandle, called the \textit{dihedral quandle} of order $n$ and denoted by $R_n$.
We define a binary operation $*:\Z_n \times \Z_n \to \Z_n$ by $a*b:=a+1$ for any $a,b \in \Z_n$.
Then $(\mathbb{Z}_n,*)$ is a rack, called the \textit{cyclic rack} of order $n$ and denoted by $C_n$.
Let $R$ be a ring and $M$ a left $R[t^{\pm 1},s]/(s(t+s-1))$-module.
We define a binary operation $*:M \times M \to M$ by $x*y:=tx+sy$ for any $x,y \in M$.
Then $(M,*)$ is a rack, called the \textit{$(t,s)$-rack}.
When $s=1-t$, the $(t,s)$-rack is called the \textit{Alexander quandle}.

\begin{proposition}\label{prop:quandle rack}
Let $X$ be a rack.
Fix $e_1,\ldots,e_m\in\mathbb{Z}$ and $i_1,\ldots,i_m\in\{1,\ldots,n\}$.
Then $X^n$ is a rack with the binary operation defined by
\[
(x_1,\ldots,x_n)*(y_1,\ldots,y_n)
=(x_1*^{e_1}y_{i_1}*^{e_2}\cdots*^{e_m}y_{i_m},\ldots,x_n*^{e_1}y_{i_1}*^{e_2}\cdots*^{e_m}y_{i_m}).
\]
\end{proposition}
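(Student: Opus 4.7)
The plan is to reformulate the operation in terms of self-maps of $X$ and then reduce the self-distributivity to a single functional identity. Writing $b=(b_1,\ldots,b_n)$ and setting
\[
T_b := S_{b_{i_m}}^{e_m}\circ\cdots\circ S_{b_{i_1}}^{e_1}\colon X\to X,
\]
the proposed operation on $X^n$ becomes simply $a*b=(T_b(a_1),\ldots,T_b(a_n))$. The first rack axiom is then immediate: since each $S_{b_{i_j}}$ is a bijection of $X$, so is $T_b$, and therefore the map $S_b\colon X^n\to X^n$, $a\mapsto a*b$, is a bijection of $X^n$.

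For the self-distributive law, I would expand both sides of $(a*b)*c=(a*c)*(b*c)$ componentwise; both are $n$-tuples whose $k$-th entry is obtained by applying some composition of maps to $a_k$, so the axiom reduces to the single identity
\[
T_c\circ T_b \;=\; T_{b*c}\circ T_c
\]
of maps $X\to X$. The crucial auxiliary fact is that the rack axiom, rewritten as $S_z\circ S_u = S_{S_z(u)}\circ S_z$, iterates to $S_z\circ S_u^k = S_{S_z(u)}^k\circ S_z$ for every $k\in\Z$ (induction on $|k|$, checking $k=-1$ directly), and then further to $S_z^f\circ S_u^k = S_{S_z^f(u)}^k\circ S_z^f$ for every $f\in\Z$. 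A straightforward induction on the number of factors now promotes this to: for every finite composition $T$ of maps of the form $S_z^f$ and every $u\in X$, $k\in\Z$,
\[
T\circ S_u^k \;=\; S_{T(u)}^k\circ T.
\]

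With this lemma in hand, I would finish by sliding $T_c$ to the right through $T_b$ one factor at a time. Applying the lemma to $u=b_{i_1}$, then to $u=b_{i_2}$, and so on, each factor $S_{b_{i_j}}^{e_j}$ is replaced by $S_{T_c(b_{i_j})}^{e_j}$; using $T_c(b_{i_j})=(b*c)_{i_j}$ (which is simply the definition of the operation coordinatewise), the reshuffled composition is precisely $T_{b*c}$, so $T_c\circ T_b = T_{b*c}\circ T_c$ as required. The only real obstacle is notational bookkeeping with the compound word $S_{b_{i_m}}^{e_m}\circ\cdots\circ S_{b_{i_1}}^{e_1}$ and the possibly negative exponents $e_j$; conceptually the entire argument is the rack axiom iterated, and no new input is needed.
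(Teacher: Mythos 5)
Your proof is correct and rests on the same key fact as the paper's: the iterated conjugation identity for rack words, which the paper records as the single displayed equality $(x_0*^{s_1}\cdots*^{s_p}x_p)*(y_0*^{t_1}\cdots*^{t_q}y_q)=x_0*^{s_1}\cdots*^{s_p}x_p*^{-t_q}y_q\cdots*^{-t_1}y_1*y_0*^{t_1}\cdots*^{t_q}y_q$ and which you phrase equivalently as the operator identity $T\circ S_u^k=S_{T(u)}^k\circ T$, yielding $T_c\circ T_b=T_{b*c}\circ T_c$. Your version just makes the induction explicit; the substance is identical.
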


\begin{proof}
Since $S_a:X\to X$ is bijective, so is the map $S_{(a_1,\ldots,a_n)}:X^n\to X^n$ defined by $S_{(a_1,\ldots,a_n)}(x_1,\ldots,x_n)=(x_1,\ldots,x_n)*(a_1,\ldots,a_n)$.
We have
\begin{align*}
&(x_0*^{s_1}x_1*^{s_2}\cdots*^{s_p}x_p)*^{t_0}(y_0*^{t_1}y_1*^{t_2}\cdots*^{t_q}y_q)\\
&=x_0*^{s_1}x_1*^{s_2}\cdots*^{s_p}x_p*^{-t_q}y_q*^{-t_{q-1}}y_{q-1}*^{-t_{q-2}}\cdots*^{-t_1}y_1*^{t_0}y_0*^{t_1}\cdots*^{t_q}y_q,
\end{align*}
which implies
\[
(x_0*^{s_1}z_1*^{s_2}\cdots*^{s_p}z_p)
*^{t_0}(y_0*^{s_1}z_1*^{s_2}*\cdots*^{s_p}z_p)=x_0*^{t_0}y_0*^{s_1}z_1*^{s_2}\cdots*^{s_p}z_p
\]
for $x_0,\ldots,x_p,y_0,\ldots,y_q,z_1,\ldots,z_p\in X$ and $s_1,\ldots,s_p,t_0,\ldots,t_q\in\mathbb{Z}$.
This equation implies the second axiom of a rack.
\end{proof}

We note that, in general, $X^n$ in Proposition~\ref{prop:quandle rack} are not quandles even if $X$ is a quandle.

We define the \textit{type} of a rack $X$, denoted by $\type X$, by the minimal number of $n \in \Z_{>0}$ satisfying $a*^nb=a$ for any $a,b \in X$. 
We set $\type X:= \infty$ if we do not have such a positive integer $n$.
Any finite rack is of finite type since the set $\{S_a^n \mid n\in \Z \}$ is finite for any $a \in X$ when $X$ is finite.
For example, we can easily check that $\type C_n=n$ and $\type R_n=2$ for any positive integer $n$.

\begin{proposition}\label{prop:rack type}
Let $R_q$ be the dihedral quandle of order $q\geq3$.
We give $R_q^n$ the binary operation defined by
\[
(x_1,\ldots,x_n)*(y_1,\ldots,y_n)
=(x_1*y_1*\cdots*y_n,\ldots,x_n*y_1*\cdots*y_n),
\]
which is a rack as we saw in Proposition~\ref{prop:quandle rack}.
Then we have
\begin{align*}
\type R_q^n=
\begin{cases}
2 &\text{if $n$ is odd},\\
q &\text{if $n$ is even, and $q$ is odd},\\
q/2 &\text{if $n$ and $q$ are even}.
\end{cases}
\end{align*}
\end{proposition}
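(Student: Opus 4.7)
The plan is to reduce everything to a single coordinate by observing that $S_{(y_1,\ldots,y_n)}$ acts on $R_q^n$ diagonally, via the same affine map of $\mathbb{Z}_q$ in each entry. First I would compute, by an easy induction on $n$ using $a*b=2b-a$, that
\begin{equation*}
x * y_1 * y_2 * \cdots * y_n = (-1)^n x + c(\vec y),
\qquad c(\vec y) := 2\sum_{i=1}^{n}(-1)^{n-i} y_i.
\end{equation*}
Setting $\epsilon := (-1)^n$, iterating the affine map $t \mapsto \epsilon t + c(\vec y)$ gives
\begin{equation*}
\bigl(\vec x *^m \vec y\bigr)_k = \epsilon^m x_k + (1+\epsilon+\cdots+\epsilon^{m-1})\,c(\vec y).
\end{equation*}
By definition $\type R_q^n$ is the minimal $m\ge1$ making the right side equal to $x_k$ for every $\vec x$ and every $\vec y$.

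Next I would split into cases on the parity of $n$. For odd $n$, $\epsilon=-1$, so $S_{\vec y}^{2}$ is the identity coordinatewise; and $S_{\vec y}$ is \emph{not} the identity since taking $\vec y=(1,0,\ldots,0)$ gives $c(\vec y)=\pm2\not\equiv 0\pmod q$ (using $q\ge 3$). Hence $\type R_q^n=2$. For even $n$, $\epsilon=1$, and the formula collapses to a pure translation $\vec x \mapsto \vec x + mc(\vec y)\cdot(1,\ldots,1)$. So the type equals the least positive integer $m$ annihilating the subgroup
\begin{equation*}
C := \bigl\{\,c(\vec y)\bmod q \;:\; \vec y \in \mathbb{Z}_q^n\,\bigr\} \;\le\; \mathbb{Z}_q.
\end{equation*}

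The one step that deserves care — and the only real obstacle — is to identify $C$ exactly. The inclusion $C\subseteq 2\mathbb{Z}_q$ is clear from the definition of $c(\vec y)$; for the reverse, freeze all but one coordinate of $\vec y$ to realize every element of $2\mathbb{Z}_q$ as some $c(\vec y)$. Hence $C=2\mathbb{Z}_q$. If $q$ is odd, $2\mathbb{Z}_q=\mathbb{Z}_q$, so the minimal $m$ with $m\cdot C=0$ is $m=q$; if $q$ is even, $2\mathbb{Z}_q$ is the cyclic subgroup of order $q/2$ generated by $2$, and the minimal $m$ with $2m\equiv 0\pmod q$ is $m=q/2$. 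This yields the three claimed values and completes the argument; beyond the initial identity for $x*y_1*\cdots*y_n$ the proof is pure bookkeeping.
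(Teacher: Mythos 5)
Your proposal is correct and follows essentially the same route as the paper: both reduce $S_{\vec y}$ to the diagonal affine map $t\mapsto(-1)^n t+c(\vec y)$ with $c(\vec y)=2\sum_i(-1)^{n-i}y_i$ and then read off the order by parity of $n$ and the subgroup $2\mathbb{Z}_q$. Your write-up merely makes explicit the steps (non-triviality of $S_{\vec y}$ for odd $n$, and the identification $C=2\mathbb{Z}_q$ with its annihilator) that the paper leaves to the reader.
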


\begin{proof}
This follows from 
\begin{align*}
&(a_1,\ldots,a_n)*(b_1,\ldots,b_n) \\
&=(2(b_n-b_{n-1}+\cdots+b_1)-a_1,\ldots,2(b_n-b_{n-1}+\cdots+b_1)-a_n)
\end{align*}
for odd integer $n$, and
\begin{align*}
&(a_1,\ldots,a_n)*^k(b_1,\ldots,b_n) \\
&=(2k(b_n-b_{n-1}+\cdots-b_1)+a_1,\ldots,2k(b_n-b_{n-1}+\cdots-b_1)+a_n)
\end{align*}
for even integer $n$.
\end{proof}

\begin{definition}
A \textit{multiple group rack} $X=\bigsqcup_{\lambda\in\Lambda}G_\lambda$ is a disjoint union of groups $G_\lambda$ ($\lambda\in\Lambda$) with a binary operation $*:X\times X\to X$
satisfying the following axioms:
\begin{itemize}
\item For any $x\in X$ and $a,b\in G_\lambda$,
\[x*(ab)=(x*a)*b \text{ and } x*e_\lambda=x,\]
where $e_\lambda$ is the identity of $G_\lambda$.
\item For any $x,y,z\in X$,
\[
(x*y)*z=(x*z)*(y*z).
\]
\item For any $x\in X$ and $a,b\in G_\lambda$,
\[
(ab)*x=(a*x)(b*x),
\]
where $a*x,b*x\in G_\mu$ for some $\mu\in\Lambda$.
\end{itemize}
\end{definition}
We remark that a multiple group rack is a rack.

\begin{remark}
A multiple group rack $X=\bigsqcup_{\lambda\in\Lambda}G_\lambda$ is called a \textit{multiple conjugation quandle} if it satisfies the following axiom:
\begin{itemize}
\item
For any $a,b \in G_\lambda$, $a*b=b^{-1}ab$.
\end{itemize}
A multiple conjugation quandle is an algebra introduced in~\cite{Ishii15-1} to define coloring invariants for handlebody-knots.
A multiple conjugation quandle is a multiple group rack clearly.
\end{remark}

Next, we define a $G$-family of racks.
This is an algebra yielding a multiple group rack.

\begin{definition}
Let $G$ be a group with identity element $e$.
A \textit{$G$-family of racks} $X$ is a set with a family of binary operations $*^g:X \times X \to X$ ($g\in G$) satisfying the following axioms:
\begin{itemize}
\item
For any $x,y\in X$ and $g,h\in G$, \[x*^{gh}y=(x*^gy)*^hy\text{ and }x*^ey=x.\]
\item
For any $x,y,z\in X$ and $g,h\in G$, \[(x*^gy)*^hz=(x*^hz)*^{h^{-1}gh}(y*^hz).\]
\end{itemize}
\end{definition}

A $G$-family of racks $X$ is called a {\it $G$-family of quandles}~\cite{IshiiIwakiriJangOshiro13} if it satisfies the following axiom:
\begin{itemize}
\item
For any $x \in X$ and $g \in G$, $x*^gx=x$.
\end{itemize}
The following propositions state that a rack gives some $G$-families of racks, and a $G$-family of racks yields a multiple group rack.
They can be verified by direct calculation (see~\cite{IshiiIwakiriJangOshiro13} for more details).

\begin{proposition}\label{prop:G-family of racks}
Let $(X,*)$ be a rack.
Then $(X,\{*^i\}_{i \in \Z})$ is a $\Z$-family of racks.
Furthermore, if $\type X$ is finite, $(X,\{*^i\}_{i \in \Z_{\type X}})$ is a $\Z_{\type X}$-family of racks.
\end{proposition}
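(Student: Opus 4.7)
The plan is to translate everything into the language of the bijections $S_a$, where $S_a(x)=x*a$, and then verify each $G$-family axiom by direct computation using the rack axioms. Writing $x*^iy=S_y^i(x)$ makes the first axiom essentially a statement about composition of powers, and the second axiom a conjugation identity for iterated $S$-maps.

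First I would check the first axiom. For $i,j\in\mathbb{Z}$ and $x,y\in X$, the identity $x*^{i+j}y=(x*^iy)*^jy$ is just $S_y^{i+j}(x)=S_y^j\circ S_y^i(x)$, and $x*^0y=x$ reduces to $S_y^0=\mathrm{id}_X$; both are immediate from $S_y$ being a bijection. Since $\mathbb{Z}$ is abelian, this automatically matches the group-theoretic form $x*^{gh}y=(x*^gy)*^hy$ with $gh=g+h$.

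The main step is the second axiom. I would first rewrite the rack self-distributivity $(x*y)*z=(x*z)*(y*z)$ as the operator identity
\[
S_z\circ S_y=S_{S_z(y)}\circ S_z,
\qquad\text{equivalently}\qquad
S_z\,S_y\,S_z^{-1}=S_{y*z}.
\]
Iterating the conjugation $j$ times in $z$ and raising to the $i$-th power in $y$ gives
\[
S_z^{\,j}\,S_y^{\,i}\,S_z^{-j}=\bigl(S_z^{\,j}\,S_y\,S_z^{-j}\bigr)^{i}=S_{y*^jz}^{\,i},
\]
where the middle equality uses $S_z^{\,j}\,S_y\,S_z^{-j}=S_{y*^jz}$ proved by induction on $|j|$ from the rack identity above (the negative case uses that each $S_z$ is invertible). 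Applying this to $x$ yields
\[
(x*^iy)*^jz=S_z^{\,j}\bigl(S_y^{\,i}(x)\bigr)=S_{y*^jz}^{\,i}\bigl(S_z^{\,j}(x)\bigr)=(x*^jz)*^i(y*^jz),
\]
which is the desired axiom (again using abelianness of $\mathbb{Z}$ so that $h^{-1}gh=g$). This establishes the $\mathbb{Z}$-family of racks structure.

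For the second assertion, suppose $n:=\type X<\infty$. By definition, $a*^nb=a$ for all $a,b\in X$, i.e.\ $S_b^{\,n}=\mathrm{id}_X$ for every $b$. Consequently $S_b^{\,i}$ depends only on $i\bmod n$, so the operations $*^i$ descend to a well-defined family indexed by $\mathbb{Z}_n$. Both axioms, already verified over $\mathbb{Z}$, pass to the quotient. The only mild subtlety here is well-definedness, which is precisely the content of the definition of type, so there is essentially no obstacle beyond the operator conjugation identity in the main step.
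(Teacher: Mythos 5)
Your proof is correct. The paper offers no written proof here—it states only that the proposition "can be verified by direct calculation" and defers to the reference \cite{IshiiIwakiriJangOshiro13}—and your verification via the operator identity $S_zS_yS_z^{-1}=S_{y*z}$ (extended by induction to $S_z^{\,j}S_y^{\,i}S_z^{-j}=S_{y*^jz}^{\,i}$) together with the well-definedness check for $\Z_{\type X}$ is exactly the intended direct calculation, carried out cleanly and completely.
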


\begin{proposition}\label{prop:MGR from G-family}
Let $G$ be a group and $(X,\{*^g\}_{g\in G})$ a $G$-family of racks.
Then $\bigsqcup_{x\in X} \left( \{x\}\times G \right)$ is a multiple group rack with
\begin{align*}
&(x,g)*(y,h)=(x*^hy, h^{-1}gh), &
&(x,g)(x,h)=(x,gh)
\end{align*}
for any $x,y\in X$ and $g,h\in G$.
\end{proposition}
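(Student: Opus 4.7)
The plan is to verify, by direct computation, that the proposed structure satisfies each of the three defining axioms of a multiple group rack, using the two axioms of a $G$-family of racks at the appropriate steps. First I would note that for each fixed $x\in X$, the set $\{x\}\times G$ is a group via $(x,g)(x,h)=(x,gh)$, because the projection to the second factor is clearly a group isomorphism onto $G$; the identity element is $(x,e)$ and the inverse of $(x,g)$ is $(x,g^{-1})$. The disjoint union $\bigsqcup_{x\in X}(\{x\}\times G)$ therefore already has the correct underlying structure as a family of groups indexed by $X$.

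Next I would verify the first axiom, which asks that for fixed $y\in X$, the map $-\mapsto -*(y,h)$ respects the group structure on $\{y\}\times G$ acting on the right. Using $(y,h)(y,k)=(y,hk)$, one computes
\[
(x,g)*\bigl((y,h)(y,k)\bigr)=(x,g)*(y,hk)=\bigl(x*^{hk}y,(hk)^{-1}g(hk)\bigr),
\]
while
\[
\bigl((x,g)*(y,h)\bigr)*(y,k)=\bigl((x*^h y)*^k y,\,k^{-1}h^{-1}ghk\bigr).
\]
These agree because $(x*^h y)*^k y=x*^{hk}y$ is precisely the first $G$-family axiom, and the second components coincide trivially. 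The unit condition $(x,g)*(y,e)=(x,g)$ follows from $x*^e y=x$.

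For the rack self-distributivity $((x,g)*(y,h))*(z,k)=((x,g)*(z,k))*((y,h)*(z,k))$, I would expand both sides. The left-hand side yields $\bigl((x*^h y)*^k z,\,k^{-1}h^{-1}ghk\bigr)$. The right-hand side, after computing $(y,h)*(z,k)=(y*^k z,\,k^{-1}hk)$, becomes
\[
\bigl((x*^k z)*^{k^{-1}hk}(y*^k z),\,(k^{-1}hk)^{-1}(k^{-1}gk)(k^{-1}hk)\bigr).
\]
The $G$-components agree after the obvious cancellation, and the $X$-components agree by the second $G$-family axiom $(x*^h y)*^k z=(x*^k z)*^{k^{-1}hk}(y*^k z)$.

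Finally, I would check the third axiom $((x,g)(x,g'))*(y,h)=((x,g)*(y,h))((x,g')*(y,h))$: both sides equal $(x*^h y,\,h^{-1}gg'h)$, and crucially the two factors on the right lie in the same component $\{x*^h y\}\times G$, so the group product on the right is well-defined. I do not expect a genuine obstacle in this proof; the only point requiring a little attention is tracking the conjugated exponents in the second coordinate, which match up precisely because the $G$-component transformation is inner conjugation by $h$, making the third axiom almost tautological and aligning the $*^{k^{-1}hk}$ on the outside of the second rack axiom with the $k^{-1}hk$ that appears in the second coordinate of $(y,h)*(z,k)$.
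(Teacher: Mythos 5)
Your verification is correct: each of the three multiple-group-rack axioms is checked by the right computation, with the two $G$-family axioms invoked exactly where needed and the second coordinates matching by conjugation. The paper itself only remarks that the proposition "can be verified by direct calculation," deferring details to the cited reference, so your argument is precisely the computation the authors have in mind.
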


The multiple group rack $\bigsqcup_{x\in X} \left( \{x\}\times G \right)$ in Proposition~\ref{prop:MGR from G-family} is called the {\it associated multiple group rack} of a $G$-family of racks $(X,\{*^g\}_{g\in G})$.

\section{A coloring invariant for oriented spatial surfaces}

In this section, we introduce a coloring for an oriented spatial surface by a multiple group rack.

Let $D$ be a diagram of an oriented spatial surface.
A \textit{Y-orientation} of $D$ is a collection of orientations of all edges of $D$ without sources and sinks with respect to the orientation as shown in Fig.~\ref{fig:Y-orientation}, where an edge of $D$ is a piece of a curve each of whose endpoints is a vertex.
In this paper, a circle component of $D$ is also regarded as an edge of $D$.
Every diagram has a Y-orientation.
We note that a Y-orientation of $D$ is irrelevant to the orientations of the spatial surface represented by $D$.
\textit{Y-oriented Reidemeister moves} are Reidemeister moves between two diagrams with Y-orientations which are identical except in the disk where the move is applied.
All Y-oriented R6 moves are listed in Fig.~\ref{fig:Y-oriented R6 moves}.
We have the following proposition immediately by~\cite{Ishii15-2}.

\begin{figure}[htbp]
\mbox{}\hfill
\begin{picture}(60,60)
\put(0,15){\line(1,0){60}}
\put(0,15){\line(2,1){30}}
\put(0,15){\line(2,3){30}}
\put(60,15){\line(-2,1){30}}
\put(60,15){\line(-2,3){30}}
\put(30,30){\line(0,1){30}}
\thicklines
\put(34,15){\vector(1,0){0}}
\put(19,24.5){\vector(2,1){0}}
\put(13,34.5){\vector(-2,-3){0}}
\put(41,24.5){\vector(-2,1){0}}
\put(43,40.5){\vector(-2,3){0}}
\put(30,48){\vector(0,1){0}}
\put(30,4){\makebox(0,0){Y-orientation}}
\end{picture}
\hfill
\begin{picture}(60,60)
\put(0,15){\line(1,0){60}}
\put(0,15){\line(2,1){30}}
\put(0,15){\line(2,3){30}}
\put(60,15){\line(-2,1){30}}
\put(60,15){\line(-2,3){30}}
\put(30,30){\line(0,1){30}}
\thicklines
\put(34,15){\vector(1,0){0}}
\put(19,24.5){\vector(2,1){0}}
\put(18,42){\vector(2,3){0}}
\put(41,24.5){\vector(-2,1){0}}
\put(43,40.5){\vector(-2,3){0}}
\put(30,48){\vector(0,1){0}}
\put(30,4){\makebox(0,0){non-Y-orientation}}
\end{picture}
\hfill\mbox{}
\caption{A Y-orientation and a non-Y-orientation.}
\label{fig:Y-orientation}
\end{figure}
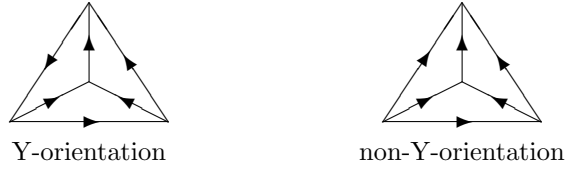

\begin{figure}[htbp]
\begin{center}
\includegraphics{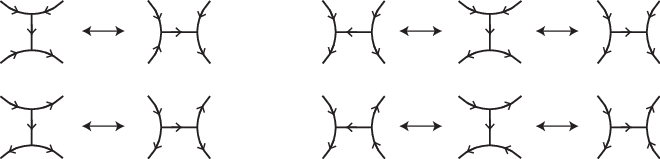}
\caption{All Y-oriented R6 moves.}
\label{fig:Y-oriented R6 moves}
\end{center}
\end{figure}

\begin{proposition}\label{prop:Y-oriented Reidemeister}
Any two Y-orientations of a diagram of an oriented spatial surface can be transformed into each other by using Y-orientated Reidemeister moves finitely and reversing the orientations of some circle components.
\end{proposition}

Let $X=\bigsqcup_{\lambda\in\Lambda}G_\lambda$ be a multiple group rack and let $D$ be a Y-oriented diagram of an oriented spatial surface.
We denote by $\mathcal{A}(D)$ the set of arcs of $D$, where an arc is a piece of a curve each of whose endpoints is an undercrossing or a vertex.
In this paper, an orientation of an arc is also represented by the normal orientation, which is obtained by rotating the usual orientation counterclockwise by $\pi/2$ on the diagram.
An \textit{$X$-coloring} of $D$ is a map $C : \mathcal{A}(D) \to X$ satisfying the conditions depicted in Fig.~\ref{fig:coloring} at each crossing and vertex of $D$.
We denote by $\operatorname{Col}_X(D)$ the set of all $X$-colorings of $D$.
Then we have the following propositions.

\begin{figure}[htb]
\mbox{}\hfill
\begin{picture}(70,80)
 \put(10,50){\line(1,0){17}}
 \put(50,50){\line(-1,0){17}}
 \put(30,70){\vector(0,-1){40}}
 \put(31,65){\makebox(0,0){$\rightarrow$}}
 \put(30,73){\makebox(0,0)[b]{$y$}}
 \put(7,50){\makebox(0,0)[r]{$x$}}
 \put(53,50){\makebox(0,0)[l]{$x*y$}}
\end{picture}
\hfill
\begin{picture}(60,80)
 \put(10,70){\vector(1,-1){20}}
 \put(50,70){\vector(-1,-1){20}}
 \put(30,50){\vector(0,-1){20}}
 \put(15,65){\makebox(0,0){$\nearrow$}}
 \put(45,65){\makebox(0,0){$\searrow$}}
 \put(31,35){\makebox(0,0){$\rightarrow$}}
 \put(8,72){\makebox(0,0)[br]{$a$}}
 \put(52,72){\makebox(0,0)[bl]{$b$}}
 \put(30,27){\makebox(0,0)[t]{$ab$}}
\end{picture}
\hfill
\begin{picture}(60,80)
 \put(30,50){\vector(1,-1){20}}
 \put(30,50){\vector(-1,-1){20}}
 \put(30,70){\vector(0,-1){20}}
 \put(15,35){\makebox(0,0){$\searrow$}}
 \put(45,35){\makebox(0,0){$\nearrow$}}
 \put(31,65){\makebox(0,0){$\rightarrow$}}
 \put(8,28){\makebox(0,0)[tr]{$a$}}
 \put(52,28){\makebox(0,0)[tl]{$b$}}
 \put(30,73){\makebox(0,0)[b]{$ab$}}
 \put(30,5){\makebox(0,0){$x,y \in X,~a,b\in G_\lambda$}}
\end{picture}
\hfill\mbox{}
\caption{The coloring condition.}
\label{fig:coloring}
\end{figure}
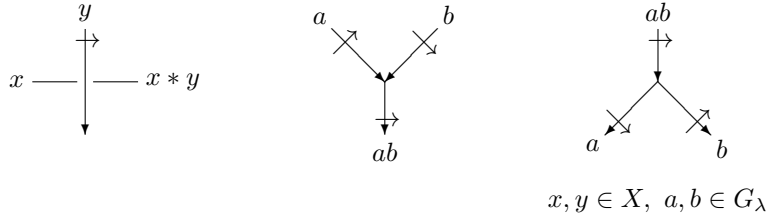

\begin{proposition}\label{prop:Y-orientation}
Let $X=\bigsqcup_{\lambda \in \Lambda}G_\lambda$ be a multiple group rack and let $D$ be a Y-oriented diagram of an oriented spatial surface.
Let $D'$ be a diagram obtained by applying one of Y-oriented Reidemeister moves to $D$ once.
For any $X$-coloring $C$ of $D$, there is a unique $X$-coloring $C'$ of $D'$ which coincides with $C$ except in the disk where the move is applied.
\end{proposition}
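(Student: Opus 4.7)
The plan is a case-by-case verification over each Y-oriented Reidemeister move R1--R6 of Figure~\ref{FIG:spatial_surface_Reidemeister_move}, with the many Y-oriented R6 cases enumerated in Figure~\ref{fig:Y-oriented R6 moves}. Fix the disk $B$ inside which the move is performed. The colors of the arcs of $D$ and $D'$ meeting $\partial B$ are identified and prescribed by $C$; the task in each case is to show that these boundary colors extend in a unique way to the arcs of $D'$ inside $B$ so that the coloring conditions of Figure~\ref{fig:coloring} hold at every crossing and trivalent vertex inside $B$.

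First I would handle the classical moves R1, R2, R3, which involve only crossings, using the rack structure alone. Bijectivity of each $S_a$ gives the unique propagation for any kink or R2 cancellation, and the rack identity $(x*y)*z=(x*z)*(y*z)$ is exactly what is needed for R3; this part of the argument is formally identical to the analogous step in the classical (framed) link coloring theory, so no multiple group rack axiom is required here.

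Next I would treat the vertex moves R4, R5 and the Y-oriented R6 moves using the multiple group rack axioms. When a strand passes through a trivalent vertex, the axiom $x*(ab)=(x*a)*b$ (together with $x*e_\lambda=x$) identifies the single action by the vertex color $ab$ on one side of the move with the two successive actions by $a$ and $b$ that appear on the other side. When a vertex is slid across a strand, the axiom $(ab)*x=(a*x)(b*x)$ reproduces the vertex relation on the other side, and the accompanying clause that $a*x$ and $b*x$ lie in a common $G_\mu$ is exactly what guarantees that the product appearing there is well-defined. Since every color inside $B$ is built from the boundary colors by a deterministic sequence of applications of $S_a^{\pm 1}$ and of group multiplications, uniqueness of $C'$ is automatic as soon as existence is checked.

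The main obstacle is the sheer case analysis for the Y-oriented R6 moves: one must check each of the diagrams in Figure~\ref{fig:Y-oriented R6 moves} separately, keeping careful track of the over/under information, of the Y-orientation at each vertex (which determines whether an incident pair is multiplied as $ab$ or $ba$), and of the group $G_\lambda$ to which each vertex color belongs. In every case the verification reduces, after writing down the coloring conditions on both sides of the move, to a single application of one of the three multiple group rack axioms; once this bookkeeping framework is in place, the individual checks are routine.
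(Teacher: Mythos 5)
Your overall strategy---an exhaustive case check of each Y-oriented move, with uniqueness coming for free from the deterministic propagation of colors into the disk and existence checked by matching the coloring conditions on the two sides against the axioms---is exactly the paper's proof, which is stated in precisely this telegraphic form. However, two of your specific predictions about which axiom serves which move do not match the actual move set of Figures~\ref{FIG:spatial_surface_Reidemeister_move} and \ref{fig:Y-oriented R6 moves}. First, the paper verifies the Y-oriented R2 moves using $x*(ab)=(x*a)*b$ and $x*e_\lambda=x$, not merely bijectivity of $S_a$: in this setting the cancellation $(x*a)*a^{-1}=x$ is obtained from the group structure of $G_\lambda$, so your claim that the classical moves need ``no multiple group rack axiom'' overstates what the bare rack structure accomplishes here. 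Second, the R6 moves are IH-type moves involving two trivalent vertices and no crossings; their verification rests on the associativity of the multiplication in $G_\lambda$---an ingredient you never name---rather than on ``a single application of one of the three multiple group rack axioms,'' while the identities $x*(ab)=(x*a)*b$ and $(ab)*x=(a*x)(b*x)$ are what the paper uses for the R5 moves (a strand passing under or over a vertex), and the self-distributivity handles R3 as you say. These misassignments would be corrected automatically once you work from the figures, so the method is sound and would succeed; but as written the proposal mislocates where the group-theoretic axioms actually enter the argument.
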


\begin{proof}
The assignment of elements of $X$ to the arcs in the disk where the move is applied is uniquely determined by those of the other arcs.
The well-definedness of the resulting $X$-coloring follows from  
\[x*(ab)=(x*a)*b,~ x*e_\lambda=x \]
for Y-oriented R2 moves, and 
\[(x*y)*z=(x*z)*(y*z) \]
for Y-oriented R3 moves, and 
\[x*(ab)=(x*a)*b,~ (ab)*x=(a*x)(b*x) \]
for Y-oriented R5 moves, and the associativity of multiplication for Y-oriented R6 moves.
We illustrate a few cases of Y-oriented Reidemeister moves in Fig.~\ref{fig:proof_coloring}.
\end{proof}

\begin{figure}[htbp]
\begin{center}
\includegraphics[width=120mm]{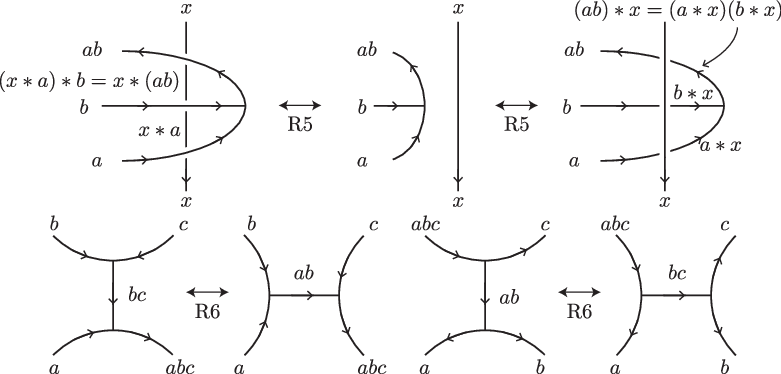}
\caption{$X$-colorings for Y-oriented Reidemeister moves.}
\label{fig:proof_coloring}
\end{center}
\end{figure}

\begin{proposition}\label{prop:S^1-orientation}
Let $X=\bigsqcup_{\lambda \in \Lambda}G_\lambda$ be a multiple group rack 
and let $D$ be a Y-oriented diagram of an oriented spatial surface.
Let $D'$ be a diagram obtained by reversing the orientations of some circle components of $D$.
For an $X$-coloring $C$ of $D$, we define the map $C' : \mathcal{A}(D') \to X$ by
\begin{align*}
C'(\alpha)=
\begin{cases}
C(\alpha) &(\text{the orientation of $\alpha$ in $D'$ coincides with that in $D$}),\\
C(\alpha)^{-1} &(\text{otherwise})
\end{cases}
\end{align*}
for any $\alpha \in \mathcal{A}(D')=\mathcal{A}(D)$.
Then $C'$ is an $X$-coloring of $D'$, and the map from $\col_X(D)$ to $\col_X(D')$ sending $C$ into $C'$ is a bijection.
\end{proposition}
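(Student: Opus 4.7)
My plan is to check the coloring conditions locally. Since a circle component of $D$ contains no trivalent vertices, reversing it leaves every vertex of $D'$ and its incident arcs untouched, so the vertex conditions are automatic and only the crossing condition has to be reverified. At each crossing of $D'$ four subcases arise, depending on whether the over-arc and/or the under-arc belong to one of the reversed circle components.

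Before the case analysis I would extract two consequences of the multiple group rack axioms that handle every subcase. From $x*(aa^{-1})=(x*a)*a^{-1}$ and $x*e_\lambda=x$ we immediately get $(x*a)*a^{-1}=x$ for all $x\in X$ and $a\in G_\lambda$. From $(ab)*y=(a*y)(b*y)$, taking $a=b=e_\lambda$ shows that $e_\lambda*y$ is idempotent in the group $G_\mu$ it lies in, hence equals $e_\mu$; then taking $b=a^{-1}$ yields $a^{-1}*y=(a*y)^{-1}$ for any $a\in G_\lambda$ and $y\in X$.

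With these two identities in hand the case analysis is routine. If neither arc at a crossing is reversed, the required condition is unchanged. If only the over-arc (of old color $y$) is reversed, its color becomes $y^{-1}$ and its normal direction flips, so the new crossing condition reads $(x*y)*y^{-1}=x$, which is the first identity. If only the under-arc is reversed, the condition reads $x^{-1}*y=(x*y)^{-1}$, which is the second identity. If both arcs are reversed, the condition reads $(x*y)^{-1}*y^{-1}=x^{-1}$, which is obtained by combining the two identities. This shows that $C'$ is an $X$-coloring of $D'$. The bijection statement then follows at once, since applying the same recipe to $(D',C')$ with respect to the same set of circle components returns $(D,C)$, so $C\mapsto C'$ is an involution between $\col_X(D)$ and $\col_X(D')$. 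The main subtlety will be the last crossing case, in which both arcs at one crossing belong to (possibly the same) reversed component and both identities have to be applied simultaneously; but once the two identities are established, the verification is purely mechanical.
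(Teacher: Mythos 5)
Your proof is correct and takes essentially the same route as the paper: the paper's proof likewise reduces everything to the two identities $x*^{-1}y=x*y^{-1}$ and $x^{-1}*y=(x*y)^{-1}$ and checks the crossing conditions locally (the vertex conditions being unaffected), with the bijectivity noted as immediate. Your explicit derivation of these two identities from the multiple group rack axioms is a detail the paper omits, but the argument is the same.
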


\begin{proof}
The map $C'$ is an $X$-coloring since 
\begin{align*}
x*^{-1}y=x*y^{-1},~ 
x^{-1}*y=(x*y)^{-1}
\end{align*}
for any $x,y \in X$ (see Fig.~\ref{fig:reverse coloring}).
It is easy to see that the map from $\col_X(D)$ to $\col_X(D')$ sending $C$ into $C'$ is a bijection.
\end{proof}

\begin{figure}[htb]
\begin{center}
\includegraphics{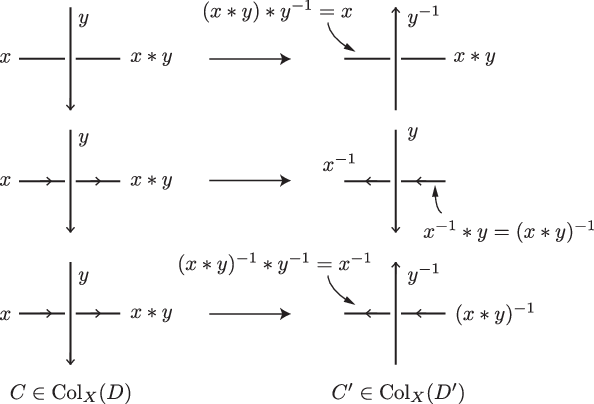}
\caption{The $X$-colorings $C$ and $C'$.}
\label{fig:reverse coloring}
\end{center}	
\end{figure}

Then we have the following theorem 
by Propositions~\ref{prop:Y-oriented Reidemeister},~\ref{prop:Y-orientation} and~\ref{prop:S^1-orientation}.

\begin{theorem}\label{thm:coloring}
Let $X=\bigsqcup_{\lambda\in\Lambda}G_\lambda$ be a multiple group rack.
Let $D$ and $D'$ be Y-oriented diagrams of an oriented spatial surface $F$.
Then there is a one-to-one correspondence between $\col_X(D)$ and $\col_X(D')$.
In particular, the cardinality $\#\col_X(D)$ is an invariant of $F$.
\end{theorem}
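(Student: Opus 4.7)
The plan is to combine Theorem~\ref{theorem_oriented_surface} with Propositions~\ref{prop:Y-orientation} and \ref{prop:S^1-orientation}, using the auxiliary result from \cite{Ishii15-2} recalled in the paragraph just preceding the statement. The underlying idea is that $\#\col_X$ is preserved under the three operations: (a) a Y-oriented Reidemeister move, (b) a change of Y-orientation that keeps the set of circle-component orientations fixed, and (c) a reversal of orientations of circle components. Since (b) can be realized as a finite sequence of (a)'s by \cite{Ishii15-2}, and since any two diagrams of $F$ are related by ordinary Reidemeister moves by Theorem~\ref{theorem_oriented_surface}, it suffices to exhibit a chain of the allowed operations from $D$ to $D'$.

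Concretely, I would proceed as follows. First, forgetting Y-orientations, Theorem~\ref{theorem_oriented_surface} gives a finite sequence of Reidemeister moves $D=D_0\to D_1\to\cdots\to D_k=D'$ on $S^2$. The moves in this sequence need not be Y-oriented. To fix this, I would inductively upgrade each move: assuming $D_{i-1}$ has been equipped with a Y-orientation extending that of $D$ via a chain of operations of type (a) and (c), I would choose a Y-orientation on $D_i$ that agrees with the one on $D_{i-1}$ outside the move disk (Y-orientations always exist by the remark in Section~5), and if the assignment inside the disk fails to be a Y-orientation on $D_i$, invoke \cite{Ishii15-2} on $D_i$ to replace it with a genuine Y-orientation through further Y-oriented Reidemeister moves, possibly at the cost of reversing some circle components of $D_i$. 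After $k$ such steps, I obtain a Y-oriented diagram $D''$ related to $D$ by a finite sequence of Y-oriented Reidemeister moves and circle-component reversals, whose underlying unoriented diagram together with Y-orientation on non-circle edges agrees with $D'$.

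Applying Proposition~\ref{prop:Y-orientation} to each Y-oriented Reidemeister move, and Proposition~\ref{prop:S^1-orientation} to each circle-component reversal, produces a bijection $\col_X(D)\to\col_X(D'')$. Finally, the Y-orientations of $D''$ and $D'$ can differ at most in the orientations of circle components, so one further application of Proposition~\ref{prop:S^1-orientation} yields a bijection $\col_X(D'')\to\col_X(D')$. Composing these bijections gives the desired one-to-one correspondence, and the invariance of $\#\col_X(D)$ follows.

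The main obstacle is the careful handling of Y-orientations when passing through a Reidemeister move: a local modification can create sources or sinks at vertices lying on the boundary of the move disk, so one cannot in general realize each move in the sequence of Theorem~\ref{theorem_oriented_surface} as a single Y-oriented move. The role of \cite{Ishii15-2} is precisely to absorb this discrepancy into an additional finite chain of Y-oriented moves between the two Y-orientations on $D_i$, and the role of Proposition~\ref{prop:S^1-orientation} is to absorb the remaining ambiguity on circle components. Once this is organized, the rest is a routine bookkeeping argument verifying that the bijections compose consistently.
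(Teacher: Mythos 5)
Your proposal is correct and follows essentially the same route as the paper: the paper's (very terse) justification is exactly the combination of Theorem~\ref{theorem_oriented_surface}, the result of \cite{Ishii15-2} on transforming Y-orientations up to circle-component reversals, and Propositions~\ref{prop:Y-orientation} and \ref{prop:S^1-orientation}. Your inductive bookkeeping over the Reidemeister sequence just makes explicit what the paper delegates to the citation of \cite{Ishii15-2}.
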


In Theorem~\ref{thm:coloring}, if $X$ is an associated multiple group rack of a $G$-family of racks, we can enhance the invariant by using group representations 
of the fundamental group $\pi_1(S^3-F)$ to $G$.

Let $F$ be an oriented spatial surface 
and let $G$ be a group.
A \textit{$G$-flow} of $F$ is a group representation of the fundamental group $\pi_1(S^3-F)$ to $G$, which is a group homomorphism from $\pi_1(S^3-F)$ to $G$.
A $G$-flow of $F$ can also be understood diagrammatically as follows, where we emphasize a $G$-flow of $F$ itself does not depend a diagram of $F$.
Let $D$ be a Y-oriented diagram of $F$.
A \textit{$G$-flow} of $D$ is a map $\varphi : \mathcal{A}(D) \to G$ satisfying the conditions depicted in Fig.~\ref{fig:G-flow} 
at each crossing and vertex of $D$, where $\alpha \in \mathcal{A}(D)$ corresponds to the loop winding around $\alpha$ with the orientation following the right-hand rule.
In this paper, to avoid confusion, we often represent elements of $G$ with underlines.
We call a pair $(D,\varphi)$ of a Y-oriented diagram $D$ and its $G$-flow $\varphi$ a \textit{$G$-flowed diagram} of $F$, and denote by $\flow(D;G)$ the set of all $G$-flows of $D$.
Let $D'$ be another Y-oriented diagram of $F$.
Two $G$-flows $\varphi \in \flow(D;G)$ and $\varphi' \in \flow(D';G)$ 
can be identified when they represent the same $G$-flow of $F$.
In this sense, we can regard $\flow(D;G)=\flow(D';G)$.

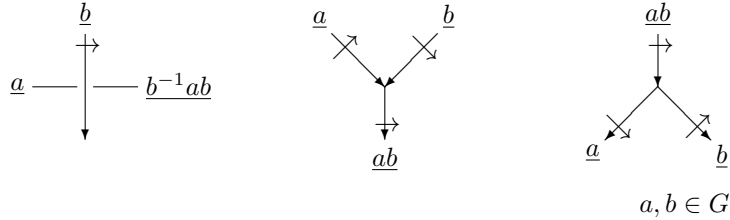
\begin{figure}[htb]
\mbox{}\hfill
\begin{picture}(70,80)
 \put(10,50){\line(1,0){17}}
 \put(50,50){\line(-1,0){17}}
 \put(30,70){\vector(0,-1){40}}
 \put(31,65){\makebox(0,0){$\rightarrow$}}
 \put(30,73){\makebox(0,0)[b]{$\underline{b}$}}
 \put(7,50){\makebox(0,0)[r]{$\underline{a}$}}
 \put(53,50){\makebox(0,0)[l]{$\underline{b^{-1}ab}$}}
\end{picture}
\hfill
\begin{picture}(60,80)
 \put(10,70){\vector(1,-1){20}}
 \put(50,70){\vector(-1,-1){20}}
 \put(30,50){\vector(0,-1){20}}
 \put(15,65){\makebox(0,0){$\nearrow$}}
 \put(45,65){\makebox(0,0){$\searrow$}}
 \put(31,35){\makebox(0,0){$\rightarrow$}}
 \put(8,72){\makebox(0,0)[br]{$\underline{a}$}}
 \put(52,72){\makebox(0,0)[bl]{$\underline{b}$}}
 \put(30,27){\makebox(0,0)[t]{$\underline{ab}$}}
\end{picture}
\hfill
\begin{picture}(60,80)
 \put(30,50){\vector(1,-1){20}}
 \put(30,50){\vector(-1,-1){20}}
 \put(30,70){\vector(0,-1){20}}
 \put(15,35){\makebox(0,0){$\searrow$}}
 \put(45,35){\makebox(0,0){$\nearrow$}}
 \put(31,65){\makebox(0,0){$\rightarrow$}}
 \put(8,28){\makebox(0,0)[tr]{$\underline{a}$}}
 \put(52,28){\makebox(0,0)[tl]{$\underline{b}$}}
 \put(30,73){\makebox(0,0)[b]{$\underline{ab}$}}
 \put(40,5){\makebox(0,0){$a,b\in G$}}
\end{picture}
\hfill\mbox{}
\caption{A $G$-flow.}
\label{fig:G-flow}
\end{figure}

Let $X$ be a $G$-family of racks and let $(D,\varphi)$ be a $G$-flowed diagram of an oriented spatial surface.
An \textit{$X$-coloring} of $(D,\varphi)$ is a map $C : \mathcal{A}(D) \to X$ satisfying the conditions depicted in Fig.~\ref{fig:G-family of rack coloring} at each crossing and vertex of $D$.
We denote by $\col_X(D,\varphi)$ the set of all $X$-colorings of $(D,\varphi)$.

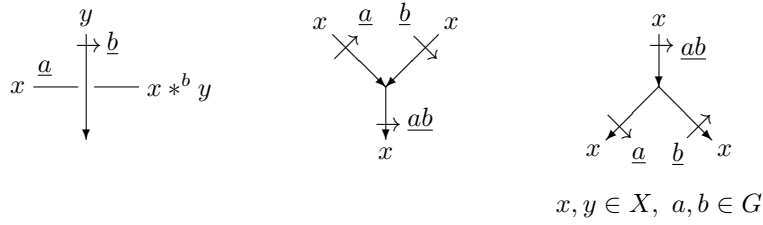
\begin{figure}[htb]
\mbox{}\hfill
\begin{picture}(70,80)
 \put(10,50){\line(1,0){17}}
 \put(50,50){\line(-1,0){17}}
 \put(30,70){\vector(0,-1){40}}
 \put(31,65){\makebox(0,0){$\rightarrow$}}
 \put(40,62){\makebox(0,0)[b]{$\underline{b}$}}
 \put(17,57){\makebox(0,0)[r]{$\underline{a}$}}
  \put(30,73){\makebox(0,0)[b]{$y$}}
 \put(7,50){\makebox(0,0)[r]{$x$}}
 \put(53,50){\makebox(0,0)[l]{$x*^by$}}
\end{picture}
\hfill
\begin{picture}(60,80)
 \put(10,70){\vector(1,-1){20}}
 \put(50,70){\vector(-1,-1){20}}
 \put(30,50){\vector(0,-1){20}}
 \put(15,65){\makebox(0,0){$\nearrow$}}
 \put(45,65){\makebox(0,0){$\searrow$}}
 \put(31,35){\makebox(0,0){$\rightarrow$}}
 \put(25,73){\makebox(0,0)[br]{$\underline{a}$}}
 \put(35,73){\makebox(0,0)[bl]{$\underline{b}$}}
 \put(43,42){\makebox(0,0)[t]{$\underline{ab}$}}
  \put(8,72){\makebox(0,0)[br]{$x$}}
 \put(52,72){\makebox(0,0)[bl]{$x$}}
 \put(30,27){\makebox(0,0)[t]{$x$}}
\end{picture}
\hfill
\begin{picture}(60,80)
 \put(30,50){\vector(1,-1){20}}
 \put(30,50){\vector(-1,-1){20}}
 \put(30,70){\vector(0,-1){20}}
 \put(15,35){\makebox(0,0){$\searrow$}}
 \put(45,35){\makebox(0,0){$\nearrow$}}
 \put(31,65){\makebox(0,0){$\rightarrow$}}
 \put(25,27){\makebox(0,0)[tr]{$\underline{a}$}}
 \put(35,29){\makebox(0,0)[tl]{$\underline{b}$}}
 \put(43,60){\makebox(0,0)[b]{$\underline{ab}$}}
  \put(8,28){\makebox(0,0)[tr]{$x$}}
 \put(52,28){\makebox(0,0)[tl]{$x$}}
 \put(30,73){\makebox(0,0)[b]{$x$}}
  \put(30,5){\makebox(0,0){$x,y \in X,~a,b\in G$}}
\end{picture}
\hfill\mbox{}
\caption{The coloring condition of a $G$-flowed diagram.}
\label{fig:G-family of rack coloring}
\end{figure}

Let $X \times G$ be the associated multiple group rack of a $G$-family of racks $X$.
Let $\pr_G$ and $\pr_X$ be the natural projections from $X \times G$ to $G$ and from $X \times G$ to $X$, respectively.
Let $D$ be a Y-oriented diagram of an oriented spatial surface.
For $\varphi \in \flow (D;G)$, 
we can identify 
an $X$-coloring of $(D,\varphi)$ 
with an $(X\times G)$-coloring $C$ of $D$ 
satisfying $\pr_G \circ C=\varphi$, 
that is, 
for any $C \in \col_{X \times G}(D)$, 
the map $\pr_G \circ C$ corresponds to the $G$-flow of $D$, 
and the map $\pr_X \circ C$ corresponds to the $X$-coloring of $(D,\pr_G \circ C)$.
Under the identification, we have 
\begin{align*}
\col_{X \times G}(D)
&=\bigsqcup_{\varphi \in \flow(D;G)}\{ C \in \col_{X \times G}(D) \,|\, \pr_G \circ C=\varphi \}\\
&=\bigsqcup_{\varphi \in \flow(D;G)}\col_X(D,\varphi).
\end{align*}
Since a $G$-flow $\varphi$ of $F$ depends only on $F$, 
we have the following proposition 
by Theorem~\ref{thm:coloring}.

\begin{proposition}\label{prop:G-family coloring}
Let $X$ be a $G$-family of racks.
Let $D$ and $D'$ be Y-oriented diagrams of an oriented spatial surface $F$ 
and let $\varphi$ be a $G$-flow of $F$.
Then there is a one-to-one correspondence between $\col_X(D,\varphi)$ and $\col_X(D',\varphi)$.
\end{proposition}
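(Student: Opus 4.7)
The plan is to reduce the statement to Theorem~\ref{thm:coloring} applied to the associated multiple group rack $X \times G$. Just before the proposition, the paper establishes the disjoint decomposition
\[
\col_{X \times G}(D)=\bigsqcup_{\varphi \in \flow(D;G)}\col_X(D,\varphi),
\]
and the analogous decomposition holds for $D'$. Theorem~\ref{thm:coloring}, applied to the multiple group rack $X \times G$, produces a bijection $\Phi\colon \col_{X \times G}(D) \to \col_{X \times G}(D')$. The goal is then to show that $\Phi$ respects these decompositions, i.e.\ that $\Phi$ restricts to a bijection $\col_X(D,\varphi) \to \col_X(D',\varphi)$ for every $\varphi \in \flow(D;G) = \flow(D';G)$.

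To verify this, I would unpack $\Phi$ as a composition of the elementary bijections from Propositions~\ref{prop:Y-orientation} and \ref{prop:S^1-orientation}, one per Y-oriented Reidemeister move or circle-component reversal in a chosen sequence relating $D$ and $D'$. At each elementary step I would check that $\pr_G \circ C' = \pr_G \circ C$ when the two sides are identified via the intrinsic definition of a $G$-flow on $F$. For Reidemeister moves this is immediate from the definition of $X \times G$: the $G$-component of $(x,g)*(y,h) = (x*^h y, h^{-1}gh)$ is precisely the conjugation rule $\underline{a}\mapsto\underline{b^{-1}ab}$ defining a $G$-flow at a crossing in Figure~\ref{fig:G-flow}, and the $G$-component of the product $(x,g)(x,h) = (x,gh)$ is the multiplication rule at a trivalent vertex. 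For the orientation-reversal step, the inverse $(x,g)^{-1} = (x,g^{-1})$ matches how a $G$-flow transforms when a loop is reversed. Hence $\pr_G$ commutes with each elementary bijection, so with $\Phi$, and the intrinsic $G$-flow $\varphi$ is preserved throughout.

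The main obstacle is conceptual rather than computational: one must confirm that the identification $\flow(D;G) = \flow(D';G)$ used in the paper is compatible with the way $\pr_G \circ C$ transforms under the local moves, so that the statement that $\Phi$ preserves $\varphi$ is well posed. Once this is granted, the verification is a direct inspection of the formulas defining $X \times G$, and no essentially new calculation is needed beyond what is already subsumed in Theorem~\ref{thm:coloring}.
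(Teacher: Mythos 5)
Your proposal follows essentially the same route as the paper: the authors also deduce the proposition from Theorem~\ref{thm:coloring} applied to $X \times G$ via the decomposition $\col_{X \times G}(D)=\bigsqcup_{\varphi}\col_X(D,\varphi)$, justifying compatibility by the remark that a $G$-flow of $F$ depends only on $F$ and not on the diagram. Your additional step-by-step check that $\pr_G$ commutes with each elementary bijection simply makes explicit what the paper leaves implicit, so the argument is correct and matches theirs.
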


By Theorem~\ref{thm:coloring} and Proposition~\ref{prop:G-family coloring}, 
we have the following corollary immediately.

\begin{corollary}\label{cor:multiset inv}
Let $X$ be a $G$-family of racks 
and let $X \times G$ be the associated multiple group rack of $X$.
Let $D$ be a Y-oriented diagram of an oriented spatial surface $F$.
Then the multiset $\{\# \col_X(D,\varphi) \mid \varphi \in \flow(D;G) \}$ 
is an invariant of $F$.
\end{corollary}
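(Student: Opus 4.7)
The plan is to reduce the corollary to the identification of $G$-flows across different diagrams together with the per-flow bijection already established in Proposition~\ref{prop:G-family coloring}. The statement is essentially a bookkeeping consequence of results already in hand; no new combinatorial input should be required.

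First, I would unpack the indexing set. Let $D$ and $D'$ be two Y-oriented diagrams of the same oriented spatial surface $F$. As emphasized in the paragraph preceding the corollary, a $G$-flow is intrinsically a group representation $\pi_1(S^3-F)\to G$, and the diagrammatic assignments $\flow(D;G)$ and $\flow(D';G)$ are both canonically identified with this intrinsic set $\flow(F;G)$. Thus I may, and will, identify $\flow(D;G)=\flow(D';G)=\flow(F;G)$ from the outset; this identification is already implicit in the discussion before Corollary~\ref{cor:multiset inv}.

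Next, I would apply Proposition~\ref{prop:G-family coloring} fiberwise. For each fixed $\varphi\in\flow(F;G)$, the proposition provides a bijection between $\col_X(D,\varphi)$ and $\col_X(D',\varphi)$; in particular their cardinalities agree. The multiset
\[
\{\#\col_X(D,\varphi)\mid\varphi\in\flow(D;G)\}
\]
is obtained by partitioning $\varphi$-indexed cardinalities over the same intrinsic set $\flow(F;G)$, with equal values at every index, so the multiset computed from $D$ coincides with the one computed from $D'$. This gives invariance under choice of Y-oriented diagram.

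The only conceivable obstacle is verifying the identification $\flow(D;G)=\flow(D';G)$ carefully, i.e.\ checking that the diagrammatic crossing/vertex relations in Figure~\ref{fig:G-flow} are precisely the Wirtinger-type presentation of $\pi_1(S^3-F)$ coming from $D$, so that different diagrams of $F$ yield canonically the same intrinsic set of representations. This is standard and is in fact already invoked in the text immediately preceding the corollary, so in the write-up I would simply cite that paragraph rather than redo it. Once this identification is granted, the proof amounts to one line invoking Proposition~\ref{prop:G-family coloring} termwise.
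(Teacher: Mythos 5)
Your argument is correct and matches the paper's: the paper derives the corollary ``immediately'' from the identification $\flow(D;G)=\flow(D';G)$ (as representations of $\pi_1(S^3-F)$, discussed just before Proposition~\ref{prop:G-family coloring}) together with the termwise bijection $\col_X(D,\varphi)\cong\col_X(D',\varphi)$ of that proposition, which is exactly your fiberwise reduction. No gaps.
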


\section{Coloring examples}

In this section, 
we give some examples of colorings for oriented spatial surfaces 
by using multiple group racks.
In Example~\ref{ex:coloring1}, 
we distinguish the two oriented spatial surfaces 
which we could not in Section~2.
In Example~\ref{ex:coloring2}, 
we distinguish two oriented Seifert surfaces of an oriented knot, 
which were distinguished in~\cite{Alford70}.
In Example~\ref{ex:coloring3}, 
we distinguish two oriented spatial closed surfaces 
which do not bound handlebodies in $S^3$.
In this section, 
we represent the multiplicity of an element of a multiset 
by a subscript with an underline.
For example, $\{a_{\underline{1}}, b_{\underline{2}}, c_{\underline{3}}\}$ represents the multiset $\{a, b, b, c, c, c\}$.

\begin{example}\label{ex:coloring1}
Let $F_1$ and $F_2$ be the oriented spatial surfaces illustrated in Fig.~\ref{fig:not invertible2}, 
where $F_2$ is the reverse of $F_1$.
We remind that 
they can not be distinguished by using the methods introduced in Section~2.
Let $(D_1,\varphi_1(a,b,c))$ and $(D_2,\varphi_2(a,b,c))$ be their $\Z_2$-flowed diagrams 
illustrated in Fig.~\ref{fig:not invertible2 diagram} for $a,b,c \in \Z_2$, respectively.
We note that $\flow(D_i;\Z_2)=\{ \varphi_i(a,b,c) \mid (a,b,c) \in \Z_2^3 \}$ for each $i=1,2$.
Let $R_3^3$ be the rack we investigated in Proposition~\ref{prop:rack type}.
Then $R_3^3$ is a $\Z_2$-family of racks by Proposition~\ref{prop:G-family of racks}.
Any $R_3^3$-coloring of $(D_1,\varphi_1(a,b,c))$ is given in the left of Fig.~\ref{fig:not invertible2 diagram} 
with the following relations:
\begin{align*}
&x*^ax=x,\\
&x*^c(x*^ax)=x*^ax,\\
&(x*^ay)*^ax=y,\\
&y*^a(x*^ay)=x
\end{align*}
for $x,y \in R_3^3$, 
which are required at the crossings $c_1,c_2,c_3$ and $c_4$, respectively.
Then we obtain that
\[ \{\# \col_{R_3^3}(D_1,\varphi) \mid \varphi \in \flow(D_1;\Z_2) \}
=\{3_{\underline{2}}, 9_{\underline{4}}, 27_{\underline{2}}\}.\]
On the other hand, 
any $R_3^3$-coloring of $(D_2,\varphi_2(a,b,c))$ is given in the right of Fig.~\ref{fig:not invertible2 diagram} 
with the following relations:
\begin{align*}
&x*^cx=x,\\
&(x*^ay)*^ax=y,\\
&y*^a(x*^ay)=(x*^cx)*^b(x*^cx)
\end{align*}
for $x,y \in R_3^3$, 
which are required at the crossings $c_5,c_6$ and $c_7$, respectively.
Then we obtain that
\[ \{\# \col_{R_3^3}(D_2,\varphi) \mid \varphi \in \flow(D_2;\Z_2) \}
=\{3_{\underline{3}}, 9_{\underline{3}}, 27_{\underline{1}}, 81_{\underline{1}}\}.\]
Consequently, 
$F_1$ and $F_2$ are not equivalent by Corollary~\ref{cor:multiset inv}, 
that is, they are not reversible.
We remark that $F_1$ and $F_2$ can not be distinguished by using $R_3$.
\end{example}

	\begin{figure}[htpb]
	\begin{center}
	\includegraphics{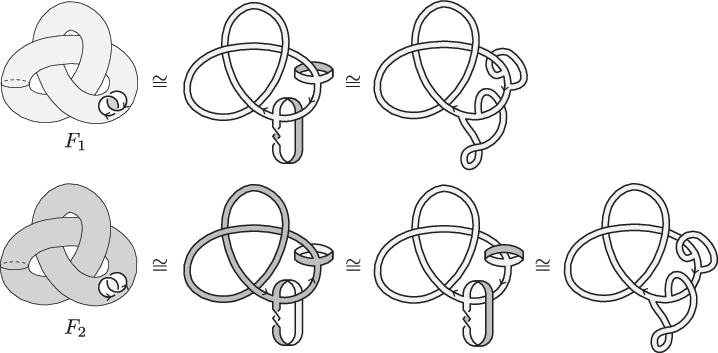}
	\caption{An oriented spatial surface $F_1$ and its reverse $F_2$.}
	\label{fig:not invertible2}
	\end{center}
	\end{figure}
	
	\begin{figure}[htpb]
	\begin{center}
	\includegraphics{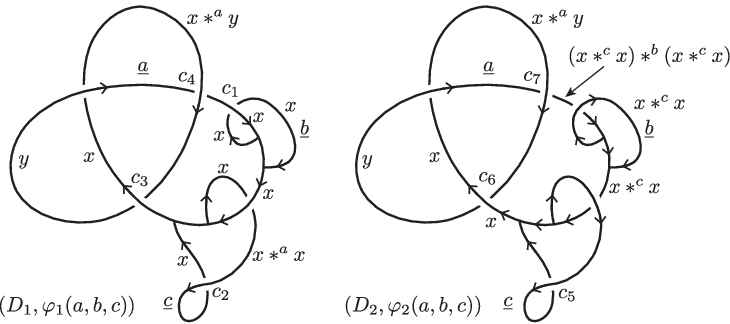}
	\caption{$\Z_2$-flowed diagrams $(D_1,\varphi_1(a,b,c))$ and $(D_2,\varphi_2(a,b,c))$.}
	\label{fig:not invertible2 diagram}
	\end{center}
	\end{figure}

\begin{remark}
In Example~\ref{ex:coloring1}, 
we can also distinguish $F_1$ and $F_2$ by using a $(t,s)$-rack.
Put $s:=2t^2+2 \in \Z_3[t^{\pm 1}]$, 
and let $X$ be the $(t,s)$-rack $\Z_3[t^{\pm 1}]/(t^4+2 t^3+2t+2)$ 
with $x*y=tx+sy$ for any $x,y \in X$.
Since $\type X=8$, 
then $X$ is a $\Z_8$-family of racks by Proposition~\ref{prop:G-family of racks}.
With the $\Z_8$-family of racks, 
we obtain that
\[ \{\# \col_X(D_1,\varphi) \mid \varphi \in \flow(D_1;\Z_8) \}
=\{9_{\underline{432}}, 81_{\underline{64}}, 729_{\underline{16}}\}\]
and
\[ \{\# \col_X(D_2,\varphi) \mid \varphi \in \flow(D_2;\Z_8) \}
=\{9_{\underline{480}}, 81_{\underline{32}}\}.\]
\end{remark}

\begin{example}\label{ex:coloring2}
Let $F_1$ and $F_2$ be the spatial surfaces illustrated in Fig.~\ref{fig:Seifert sfc}.
We note that $F_1$ and $F_2$ are Seifert surfaces of the oriented knot $K$ 
illustrated in the figure, 
which were distinguished in~\cite{Alford70}.
Let $(D_1,\varphi_1(a,b))$ and $(D_2,\varphi_2(a,b))$ be their $\Z_2$-flowed diagrams 
illustrated in Fig.~\ref{fig:Seifert sfc diagram} for $a,b \in \Z_2$, respectively.
We note that $\flow(D_i;\Z_2)=\{ \varphi_i(a,b) \mid (a,b) \in \Z_2^2 \}$ for each $i=1,2$.
The dihedral quandle $R_3$ is a $\Z_2$-family of quandles by Proposition~\ref{prop:G-family of racks}.
Any $R_3$-coloring of $(D_1,\varphi_1(a,b))$ is given in the left of Fig.~\ref{fig:Seifert sfc diagram} 
with the following relations:
\begin{align*}
&(w*^aB)*^aA=x,\\
&(z*^aB)*^aA=y,\\
&(A*^aB)*^aA=A,\\
&((A*^{-a}(B*^bA))*^ay)*^ax=z,\\
&((B*^bA)*^ay)*^ax=w
\end{align*}
for $x,y,z,w \in R_3$, 
where $A=(x*^az)*^aw$ and $B=(y*^az)*^aw$.
These relations are required at the crossings $c_1,c_2,c_3,c_4$ and $c_5$, respectively.
Then we obtain that
\[ \{\# \col_{R_3}(D_1,\varphi) \mid \varphi \in \flow(D_1;\Z_2) \}
=\{3_{\underline{4}}\}.\]
On the other hand, 
any $R_3$-coloring of $(D_2,\varphi_2(a,b))$ is given in the right of Fig.~\ref{fig:Seifert sfc diagram} 
with the following relations:
\begin{align*}
&y*^a(x*^ay)=x,\\
&(x*^ay)*^ax=y
\end{align*}
for $x,y \in R_3$, 
which are required at the crossings $c_6$ and $c_7$, respectively.
Then we obtain that
\[ \{\# \col_{R_3}(D_2,\varphi) \mid \varphi \in \flow(D_2;\Z_2) \}
=\{3_{\underline{2}}, 9_{\underline{2}}\}.\]
Consequently, 
$F_1$ and $F_2$ are not equivalent by Corollary~\ref{cor:multiset inv}.
\end{example}

	\begin{figure}[htpb]
	\begin{center}
	\includegraphics{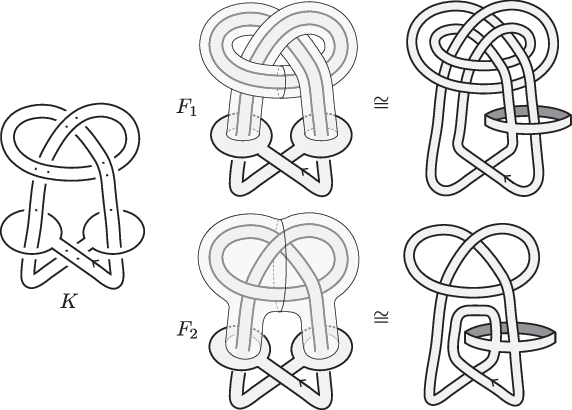}
	\caption{Two Seifert surfaces $F_1$ and $F_2$ of the oriented knot $K$.}
	\label{fig:Seifert sfc}
	\end{center}
	\end{figure}
	
	\begin{figure}[htpb]
	\begin{center}
	\includegraphics{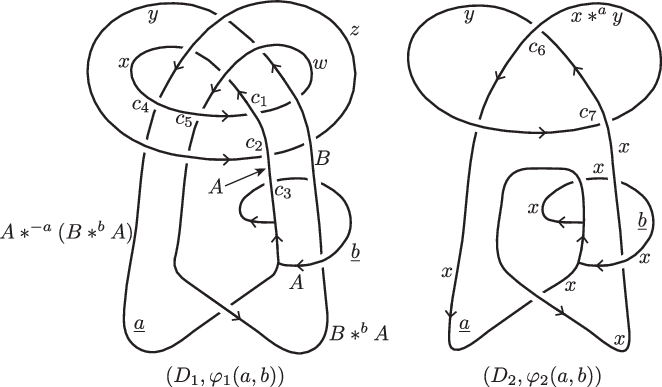}
	\caption{$\Z_2$-flowed diagrams $(D_1,\varphi_1(a,b))$ and $(D_2,\varphi_2(a,b))$.}
	\label{fig:Seifert sfc diagram}
	\end{center}
	\end{figure}

\begin{example}\label{ex:coloring3}
Let $\widetilde{F_1}$ and $\widetilde{F_2}$ be the oriented spatial closed surfaces 
illustrated in Fig.~\ref{fig:Homma sfc}, 
where we note that they do not bound handlebodies in $S^3$.
Let $F_1$ and $F_2$ be the oriented spatial surfaces 
obtained by removing a disk from $\widetilde{F_1}$ and $\widetilde{F_2}$ respectively 
as shown in the figure.
By~\cite{Matsuzaki19}, 
$\widetilde{F_1}$ and $\widetilde{F_2}$ are equivalent 
if and only if 
$F_1$ and $F_2$ are equivalent.
Let $(D_1,\varphi_1(a,b,c,d))$ and $(D_2,\varphi_2(a,b,c,d))$ be the $\Z_2$-flowed diagrams 
of $F_1$ and $F_2$ 
illustrated in Fig.~\ref{fig:Homma sfc diagram} for $a,b,c,d \in \Z_2$, respectively.
We note that $\flow(D_i;\Z_2)=\{ \varphi_i(a,b,c,d) \,|\, (a,b,c,d) \in \Z_2^4 \}$ for each $i=1,2$.
The dihedral quandle $R_3$ is a $\Z_2$-family of quandles by Proposition~\ref{prop:G-family of racks}.
Any $R_3$-coloring of $(D_1,\varphi_1(a,b,c,d))$ is given 
in the left of Fig.~\ref{fig:Homma sfc diagram} 
with the following relations:
\begin{align*}
&w*^a(y*^aw)=y,\\
&z*^a(y*^aw)=(y*^dx)*^{-b}y,\\
&(y*^aw)*^ay=w,\\
&(x*^aw)*^ay=z
\end{align*}
for $x,y,z,w \in R_3$, 
which are required at the crossings $c_1,c_2,c_3$ and $c_4$, respectively.
Then we obtain that
\[ \{\# \col_{R_3}(D_1,\varphi) \mid \varphi \in \flow(D_1;\Z_2) \}
=\{3_{\underline{6}}, 9_{\underline{8}}, 27_{\underline{2}}\}.\]
On the other hand, 
any $R_3$-coloring of $(D_2,\varphi_2(a,b,c,d))$ is given 
in the right of Fig.~\ref{fig:Homma sfc diagram} 
with the following relations:
\begin{align*}
&w*^{-a}(w*^{-a}(y*^aw))=y,\\
&z*^{-a}(w*^{-a}(y*^aw))=(y*^dx)*^{-b}y,\\
&(z*^{-a}(y*^aw))*^ay=x*^aw,\\
&(w*^{-a}(y*^aw))*^ay=y*^aw
\end{align*}
for $x,y,z,w \in R_3$, 
which are required at the crossings $c_5,c_6,c_7$ and $c_8$, respectively.
Then we obtain that
\[\{\# \col_{R_3}(D_2,\varphi) \mid \varphi \in \flow(D_2;\Z_2) \}
=\{3_{\underline{12}}, 9_{\underline{4}}\}.\]
Consequently, 
$F_1$ and $F_2$ are not equivalent by Corollary~\ref{cor:multiset inv}, 
that is, $\widetilde{F_1}$ and $\widetilde{F_2}$ are not equivalent.
\end{example}

	\begin{figure}[htpb]
	\begin{center}
	\includegraphics{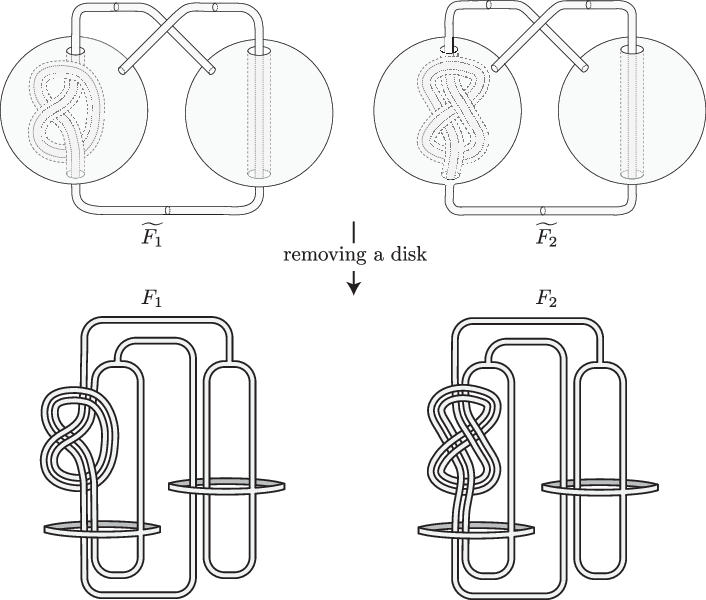}
	\caption{Two oriented spatial closed surfaces $\widetilde{F_1}$ and $\widetilde{F_2}$.}
	\label{fig:Homma sfc}
	\end{center}		
	\end{figure}
	
	\begin{figure}[htpb]
	\begin{center}
	\includegraphics{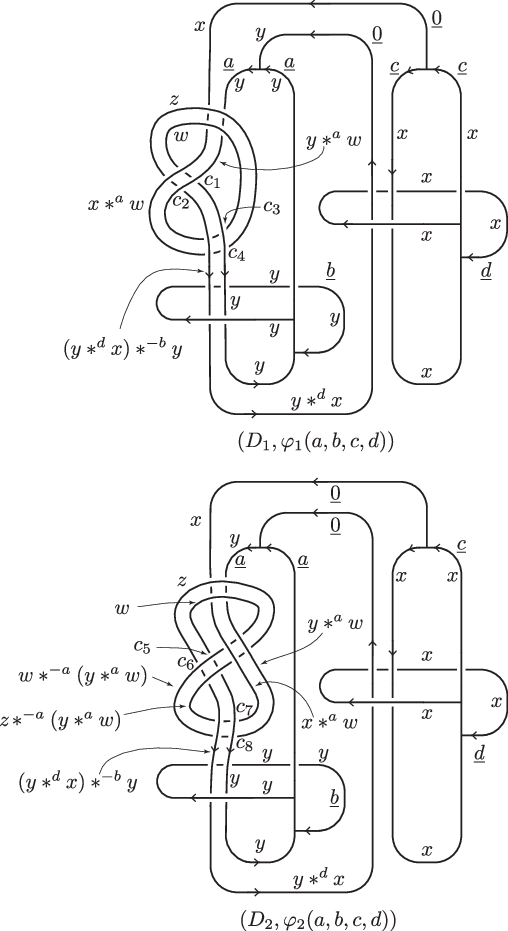}
	\caption{$\Z_2$-flowed diagrams $(D_1,\varphi_1(a,b,c,d))$ and $(D_2,\varphi_2(a,b,c,d))$.}
	\label{fig:Homma sfc diagram}
	\end{center}		
	\end{figure}

\begin{remark}
Example~\ref{ex:coloring1} gives a counter example 
for the converse of Remark~\ref{rem:elementary method} (1) and (2).
In Examples~\ref{ex:coloring2} and~\ref{ex:coloring3}, 
we showed that 
the oriented spatial surfaces $F_1$ and $F_2$ are not equivalent 
by using multiple conjugation quandle colorings.
This means that 
the regular neighborhoods $N(F_1)$ and $N(F_2)$ are not equivalent 
as handlebody-knots~\cite{Ishii15-1}.
\end{remark}

\section*{Acknowledgements}
The first author was supported by JSPS KAKENHI Grant Number 18K03292.
The third author was supported by JSPS KAKENHI Grant Number 18J10105.



\begin{thebibliography}{000}

\bibitem{Alford70}
W.~R.~Alford, 
\emph{Complements of minimal spanning surfaces of knots are not unique},
Ann.\ of Math.\ (2) \textbf{91} (1970) 419--424.

\bibitem{FennRourke92}
R.~Fenn and C.~Rourke, 
\emph{Racks and links in codimension two},
J.\ Knot Theory Ramifications~\textbf{1} (1992), 343--406.

\bibitem{Ishii08}
A.~Ishii,
\emph{Moves and invariants for knotted handlebodies},
Algebr.\ Geom.\ Topol.~\textbf{8} (2008), 1403--1418.

\bibitem{Ishii15-1}
A.~Ishii,
\emph{A multiple conjugation quandle and handlebody-knots},
Topology Appl.~\textbf{196} (2015), 492--500.

\bibitem{Ishii15-2}
A.~Ishii,
\emph{The Markov theorem for spatial graphs and handlebody-knots with Y-orientations},
Internat.\ J.\ Math.~\textbf{26} (2015), 1550116, 23pp.

\bibitem{IshiiIwakiriJangOshiro13}
A.~Ishii and M.~Iwakiri, Y.~Jang and K.~Oshiro,
\emph{A $G$-family of quandles and handlebody-knots},
Illinois J.\ Math.~\textbf{57} (2013), 817--838.

\bibitem{IshiiKishimotoMoriuchiSuzuki12}
A.~Ishii, K.~Kishimoto, H.~Moriuchi and M.~Suzuki, 
\emph{A table of genus two handlebody- knots up to six crossings}, 
J.\ Knot Theory Ramifications~\textbf{21} (2012), 1250035, 9pp.

\bibitem{Joyce82}
D.~Joyce,
\emph{A classifying invariant of knots, the knot quandle},
J.\ Pure Appl.\ Alg.~\textbf{23} (1982), 37--65.

\bibitem{Matsuzaki19}
S.~Matsuzaki,
\emph{A diagrammatic presentation and its characterization of non-split compact surfaces in the 3-sphere},
J.\ Knot Theory Ramifications~\textbf{30} (2021), 2150071, 32pp.

\bibitem{Matveev82}
S.~V.~Matveev,
\emph{Distributive groupoids in knot theory},
Mat.\ Sb.\ (N.S.) \textbf{119(161)} (1982), 78--88.
\end{thebibliography}
\end{document}